\theoremstyle{plain}
\newtheorem{thm}{Theorem}[section]
\newtheorem{prop}[thm]{Proposition}
\newtheorem{cor}[thm]{Corollary}
\theoremstyle{definition}
\theoremstyle{remark}
\newbox\xrat@below
\newbox\xrat@above
\newcommand{\xrightarrowtail}[2][]{%
  \setbox\xrat@below=\hbox{\ensuremath{\scriptstyle #1}}%
  \setbox\xrat@above=\hbox{\ensuremath{\scriptstyle #2}}%
  \pgfmathsetlengthmacro{\xrat@len}{max(\wd\xrat@below,\wd\xrat@above)+.6em}%
  \mathrel{\tikz [>->,baseline=-.75ex]
                 \draw (0,0) -- node[below=-2pt] {\box\xrat@below}
                                node[above=-2pt] {\box\xrat@above}
                       (\xrat@len,0) ;}}
\begin{document}
\pagestyle{plain}

\begin{center}
{\fontsize{14}{20}\bf
A note on the rational homotopy type of the projectivization of the tangent bundle of complex projective spaces.
}
\end{center}

\begin{center}
   \textbf{Meshach Ndlovu }\\
Department of Mathematics and Statistical Sciences,\\ Faculty of Science,\\ Botswana International University of Science and Technology,\\ e-mail: mathswithmesh@gmail.com

 \textbf{Jean Baptiste Gatsinzi}\\
Department of Mathematics and Statistical Sciences,\\ Faculty of Science,\\ Botswana International University of Science and Technology,\\ e-mail: gatsinzij@biust.ac.bw
\end{center}

\section*{Abstract}
In this paper, we determine the rational homotopy type of the total space of the projectivization of the complex tangent bundle $\tau : \mathbb{C}^n \longrightarrow E \longrightarrow \mathbb{C}P^{n}$. We show that the total space $P(E)$ of the projectivization bundle $P(\tau):  \mathbb{C}P^{n-1} \longrightarrow P(E) \longrightarrow \mathbb{C}P^{n}$ has the rational homotopy type of $\mathrm{U}(n+1)/\mathrm{U}(1)\times \mathrm{U}(1)\times \mathrm{U}(n-1)$. 

\vspace{3mm}
\textit{Mathematics Subject Classification} (2020): Primary 55P62; Secondary 55P15. 

\vspace{3mm}
\textbf{Keywords:} Projectivization bundle; Sullivan models; Complex projective space; Homogeneous space.

\section{Introduction}
The projectivization of a complex vector bundle $\xi :  \mathbb{C}^n \longrightarrow E \longrightarrow B $ over a complex manifold $B$ consists in the construction of a new fibre bundle $P(\xi) :  \mathbb{C}P^{n-1} \longrightarrow P(E) \longrightarrow B, $  called the projectivization bundle of $\xi$, by replacing each fibre $\mathbb{C}^n $ in $\xi$ with the corresponding complex projective space $\mathbb{C}P^{n-1}$. Recall that the projective complex linear group $PGL(n, \mathbb{C})$ is the quotient of the complex general linear group $GL(n, \mathbb{C})$ by the normal subgroup of scalar matrices. Let $\xi :  \mathbb{C}^n \longrightarrow E \longrightarrow B $ be a complex vector bundle with transition functions $g_{\alpha \beta} : U_{\alpha} \cap U_{\beta} \longrightarrow GL(n, \mathbb{C}).$ Consider the composition $$\overline{g_{\alpha \beta}} : U_{\alpha} \cap U_{\beta} \xrightarrow{g_{\alpha \beta}} GL(n, \mathbb{C}) \longrightarrow PGL(n, \mathbb{C}),$$ the projectivization $P(\xi)$ of $\xi$ is the fibre bundle $P(\xi):  \mathbb{C}P^{n-1} \longrightarrow P(E) \longrightarrow B$ with transition functions $\overline{g_{\alpha \beta}}$ \cite{Bott1982}. If the structure group of $\xi$ reduces to the unitary group $\mathrm{U}(n)$, for instance when both $E$ and $B$ are smooth complex manifolds, then the structure group of $P(\xi)$ reduces to the projective unitary group $\mathrm{U}(n)/S^1 = P\mathrm{U}(n)$, where $S^1$ is viewed as a subgroup of $\mathrm{U}(n)$ under the map $\lambda \longrightarrow \text{diag}(\lambda)$. 

Bott and Tu showed that the cohomology ring of the projectivization bundle total space $P(E)$  can be computed using Leray Hirsch theorem and the ring structure $H^*(P(E), \mathbb{Z})$ can be expressed in terms of the Chern classes of the complex vector bundle $\xi$ \cite{Bott1982}. In general, given a complex vector bundle $\xi : \mathbb{C}^n \longrightarrow E \longrightarrow B,$ there exist Chern classes $c_k(\xi) \in H^{2k}(B, \mathbb{Z})$, for $1 \leq k \leq n$ and $c_0(\xi)=1$,  which are homotopic invariants of the bundle \cite{MinlorStasheff1974Characteristic}. The total Chern class $c(\xi)=H^{*}(B, \mathbb{Z})$ is defined by $c(\xi)=1 + c_1(\xi) + \cdots + c_n(\xi)$. Consider the tangent bundle $\tau : \mathbb{C}^n \longrightarrow E \longrightarrow \mathbb{C}P^n$ over the complex projective spaces $\mathbb{C}P^n$. Recall that $ H^{*}(\mathbb{C}P^n,  \mathbb{Z}) \cong \mathbb{Z}[a]/a^{n+1}$, where $c_{1}(\tau) = a $ is a generator of $H^2(\mathbb{C}P^n, \mathbb{Z})$ and the total Chern class of $\tau$ is given by $c(\tau) = (1+a)^{n+1}$ \cite{Bott1982, MinlorStasheff1974Characteristic}. The cohomology algebra of the total space $P(E)$ of the projectized bundle $P(\tau)$  is  
 $$ H^{*}(P(E), \mathbb{Z}) = H^{*}(\mathbb{C}P^{n}, \mathbb{Z})[x] / \left( x^{n} + c_{1}(\tau)x^{n-1} + \cdots + c_{n-1}(\tau)x + c_{n}(\tau) \right),  $$ where $ c_{k}(\tau) \in H^{2k}(\mathbb{C}P^{n}, \mathbb{Z})$ are the Chern classes and $x$ is a  generator of $ H^{2}(\mathbb{C}P^{n-1}, \mathbb{Z})$  \cite{Felix2008, tralle2006symplectic, nishinobu2014lefschetz}. 

Rational homotopy theory presents several results derived from the projectivization bundle $P(\xi)$ and its applications \cite{Bott1982, lambrechts2008poincare, lupton1994symplectic, Thomas1981RHSerreFibratns}. For instance, Lupton and Oprea in \cite{lupton1994symplectic} proved that when $B$ is formal then the total space $P(E)$ is also formal. Thomas in \cite{Thomas1981RHSerreFibratns}  proved that the projectivization bundle $P(\xi)$ is a pure fibration. However, there is lack of comprehensive knowledge about the rational homotopy type of the total space $P(E)$ of the projectivization bundle $P(\xi)$ over the complex projective spaces $\mathbb{C}P^n$.

In this paper, we show the following result:  

\textbf{Theorem 3.2.} \textit{Let $\tau : \mathbb{C}^n \longrightarrow E \longrightarrow \mathbb{C}P^n$ be the tangent bundle over $\mathbb{C}P^n$ where $ n\geq 2$. The total space $P(E)$ of the projectivization bundle $P(\tau) : \mathbb{C}P^{n-1} \rightarrow P(E) \rightarrow \mathbb{C}P^{n}$ has the rational homotopy type of $\mathrm{U}(n+1) / \mathrm{U}(1) \times \mathrm{U}(1) \times \mathrm{U}(n-1).$}

The paper is organized as follows. In \textsection 2 we present some foundational concepts in rational homotopy theory using the Sullivan minimal models. In \textsection 3 we provide the proof of our main result.

\section{Sullivan models}
Here we give basic definitions of Sullivan minimal models theory for which the standard reference is \cite{Felix2001}. 

Let $\displaystyle A = \oplus_{n\geq 0}A^n $  be a graded algebra. It is called commutative if $a\cdot b = (-1)^{n\cdot m}b \cdot a$, where $a \in A^n$ and $b \in A^m$. A differential graded algebra (dga) is a graded algebra $A$ together with a differential which is a linear map $d : A^n \longrightarrow A^{n+1}$ such that $ d(a \cdot b) = (da)\cdot b + (-1)^{n\cdot m} a \cdot (db) $ and $d\circ d = 0.$ Furthermore, a differential graded algebra $A$ satisfying the commutativity property is called a commutative differential graded algebra (cdga). Let $V=\{V^q\}_{q\geq 1}$ be a graded vector space,   the free commutative graded algebra $\Lambda V$ over $V$ is defined as $\Lambda V = S(V^{even}) \otimes E(V^{odd})$, where $E(V^{odd})$ is the exterior algebra and $S(V^{even})$ is the symmetric algebra. If the set \( \{ v_{1}, v_{2}, \cdots \} \) forms a basis for \( V \), then \( \Lambda V \) is commonly denoted as \( \Lambda (v_{1}, v_{2}, \cdots ) \). The suspension $(sV)$ of the graded vector space $V=\{V^q\}_{q\geq 1}$ is the graded vector space defined as $(sV)^n = V^{n+1}$ for all $n$.

A Koszul Sullivan extension is a cdga morphism $(A, d) \xlongrightarrow{i} (A \otimes \Lambda V, d)$ where $V$ has a filtration $V(0) \subseteq V(1) \subseteq \cdots \subseteq V$ such that $dV(0) \subseteq A$ and $dV(k) \subseteq A \otimes V(k-1)$. The cdga $(A \otimes \Lambda V, d)$ is called a relative Sullivan algebra. For a morphism $\varphi : (A, d) \longrightarrow (B, d)$ where $H^0(A) = H^0(B) = \mathbb{Q}$ and $H^1(\varphi)$ is injective, then there exists a relative Sullivan algebra $(A \otimes \Lambda V, d)$ and a quasi-isomorphism  $(A \otimes \Lambda V, d) \xrightarrow{~~ \psi ~~} (B, d)$ such that $\varphi = \psi \circ i$ \cite{Felix2001}. A relative Sullivan algebra $(A \otimes \Lambda V, d)$ is called pure if $D(V^{\text{even}}) = 0$ and $D(V^{\text{odd}}) \subseteq A \otimes \Lambda (V^{\text{even}})$. If $A = \mathbb{Q}$, then $(\Lambda V, d)$ is called a Sullivan algebra. Moreover, a Sullivan algebra $(\Lambda V, d)$ is said to be minimal if $dV(k) \subseteq \Lambda^{\geq 2} V(k-1)$ \cite{Felix2001}. A Sullivan algebra $(\Lambda V, d) $ of the form $ (\Lambda Q \otimes \Lambda P, d)$, where $Q=V^{\text{even}}$ and $P=V^{\text{odd}}$ is called pure if $dQ = 0$ and $dP \subseteq \Lambda Q$ \cite{Felix2001}.

Let $(A, d)$ be a cdga such that  $H^0(A, d) = \mathbb{Q}$ then there exists a Sullivan algebra $(\Lambda V, d)$ together with a quasi-isomorphism given by $m : (\Lambda V, d) \longrightarrow (A, d)$. If $H^1(A, d) = 0$, then $(\Lambda V, d)$ can be chosen to be minimal. Sullivan defined a functor $A_{PL}$ from the category of topological spaces to the category of commutative differential graded algebras. Moreover, if $X$ is simply connected and of finite type, a Sullivan model of $X$ is a Sullivan model $(\Lambda V, d)$ of $A_{PL}(X)$. Therefore, $ H(\Lambda V, d) = H^{*}(X, \mathbb{Q})$. Moreover, if $(\Lambda V, d)$ is minimal then $V^n \cong \text{Hom}_{\mathbb{Z}}(\pi_n(X), \mathbb{Q})$ \cite{DennisSullivan1977}. A cdga model of $X$ is a cdga of $(A, d)$ with a quasi-isomorphism $\varphi : (\Lambda V, d) \xlongrightarrow{} (A,d)$, where $(\Lambda V, d)$ is a Sullivan model of $X$. 

A simply connected space $X$ is called formal if there is a quasi-isomorphism $\phi : (\Lambda V, d) \longrightarrow (H^{*}(X), 0)$. Spheres, complex projective spaces,  homogeneous spaces $G/H$ where $G$ and $H$ have the same rank, are examples of formal spaces \cite{Felix2001}. Assume $X$ is a smooth manifold and $\omega$ a 2-form on $X$. If $\omega$ is closed (i.e., $d\omega = 0$) and non-degenerate (i.e., $\omega^n \neq 0$), then the pair $(X, \omega)$ is called a symplectic manifold \cite{Felix2008}.  

To each fibration \(F \longrightarrow E \xlongrightarrow{p} B \) between simply connected spaces of finite type is associated a Koszul Sullivan extension $\displaystyle (A, d) \xlongrightarrow{i} (A \otimes \Lambda V, D) \longrightarrow (\Lambda V, d)$, which models  the fibration $p$. In particular, $i : (A, d) \xlongrightarrow{} (A \otimes \Lambda V, D)$ is a model of $p$ and $(\Lambda V, d)$ is a Sullivan model of $F$. Moreover, $(A \otimes \Lambda V, D)$ is the cdga model of $E$ \cite{FelixHalperin2023}. A fibration $p$  is called pure if it admits a relative Sullivan model $(A, d_A) \longrightarrow (A \otimes \Lambda V, D) \longrightarrow (\Lambda V, d_V),$ such that $D(V^{\text{even}}) = 0$ and $D(V^{\text{odd}}) \subseteq A \otimes \Lambda (V^{\text{even}})$, where $(A, d_A)$ is a cdga model of $B$ \cite{Thomas1981RHSerreFibratns}. 

\begin{thm}{\cite[Section~15]{Felix2001}, \cite[p.~83]{Felix2008}}
 Let $H$ be a closed connected subgroup of a compact connected Lie group $G$. We denote by $i : H \longrightarrow G$ the canonical inclusion
and consider the classifying map $Bi : BH \longrightarrow BG$. Let $H^{*}(BG; \mathbb{Q}) = \Lambda V$ and $H^{*}(BH; \mathbb{Q}) = \Lambda W$ be respective cohomology algebras of $BG$ and $BH$. Define a differential $d$ on $\Lambda W \otimes \Lambda (sV)$ as
$dw = 0$ if $w \in W$ and $d(sv) = H^{*}(Bi)(v)$ if $v \in V$. The cdga $(\Lambda W \otimes \Lambda (sV), d)$ is a Sullivan model for the homogeneous space $G/H$. In particular, $H^{*}(G/H; \mathbb{Q}) = H^*(\Lambda W \otimes \Lambda (sV), d)$.
\end{thm}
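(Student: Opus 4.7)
The plan is to construct minimal Sullivan models of both $P(E)$ and the homogeneous space $G/H:=\mathrm{U}(n+1)/\bigl(\mathrm{U}(1)\times\mathrm{U}(1)\times\mathrm{U}(n-1)\bigr)$ and exhibit an explicit cdga isomorphism between them. For $P(E)$, the Koszul--Sullivan extension of the fibration $\mathbb{C}P^{n-1}\to P(E)\to\mathbb{C}P^n$ takes the form $(\Lambda(a,x,z,w),D)$ with $|a|=|x|=2$, $|z|=2n-1$, $|w|=2n+1$, $Da=Dx=0$ and $Dw=a^{n+1}$. The remaining differential $Dz$ is forced by the cohomology description recalled in the introduction: since $c_k(\tau)=\binom{n+1}{k}a^k$, one has
\[
Dz \;=\; \sum_{k=0}^{n}\binom{n+1}{k}\,a^k\,x^{n-k} \;=\; \frac{(x+a)^{n+1}-a^{n+1}}{x}.
\]
For $n\geq 2$ all differentials lie in $\Lambda^{\geq 2}$, so this is already the minimal Sullivan model of $P(E)$.

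For $G/H$ I would apply Theorem 2.1 with $G=\mathrm{U}(n+1)$ and $H=\mathrm{U}(1)\times\mathrm{U}(1)\times\mathrm{U}(n-1)$. Let $t_1,t_2$ be the degree-$2$ generators coming from the two $\mathrm{U}(1)$ factors, $s_1,\ldots,s_{n-1}$ (with $|s_k|=2k$) the Chern class generators of $B\mathrm{U}(n-1)$, and $\sigma_k:=sc_k$ of degree $2k-1$ ($k=1,\ldots,n+1$) the suspensions of the Chern classes of $B\mathrm{U}(n+1)$. The Whitney sum formula applied to the block inclusion $H\hookrightarrow G$ identifies $H^*(Bi)(c_k)$ with the degree-$2k$ component of $(1+t_1)(1+t_2)(1+s_1+\cdots+s_{n-1})$, giving
\[
d\sigma_k \;=\; s_k + (t_1+t_2)\,s_{k-1} + t_1 t_2\,s_{k-2}, \qquad 1\leq k\leq n+1,
\]
with the conventions $s_0=1$ and $s_j=0$ for $j\notin\{0,\ldots,n-1\}$. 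For $k=1,\ldots,n-1$ this relation expresses $s_k$ modulo the exact element $d\sigma_k$ as a polynomial in $t_1,t_2$; eliminating the acyclic pairs $(s_k,\sigma_k)$ inductively and invoking the recursion $h_n=(t_1+t_2)h_{n-1}-t_1 t_2 h_{n-2}$ for the complete homogeneous symmetric polynomials, one finds $s_k\equiv(-1)^k h_k(t_1,t_2)$. The resulting minimal model of $G/H$ therefore has generators $t_1,t_2,\sigma_n,\sigma_{n+1}$ with
\[
d\sigma_n \;=\; (-1)^{n-1}\,h_n(t_1,t_2), \qquad d\sigma_{n+1} \;=\; (-1)^{n-1}\,t_1 t_2\,h_{n-1}(t_1,t_2).
\]

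To conclude, I would perform a change of variables in the $P(E)$ model: replace $x$ by $y:=x+a$ (still a degree-$2$ cocycle) and $w$ by $w':=az-w$ of degree $2n+1$. Then
\[
Dz \;=\; \frac{y^{n+1}-a^{n+1}}{y-a} \;=\; h_n(y,a),
\]
and the elementary identity $h_n(y,a)=y\,h_{n-1}(y,a)+a^n$ gives $Dw'=a\cdot Dz-Dw = ya\,h_{n-1}(y,a)$. The assignment $t_1\mapsto y$, $t_2\mapsto a$, $\sigma_n\mapsto (-1)^{n-1}z$, $\sigma_{n+1}\mapsto (-1)^{n-1}w'$ then defines an isomorphism of cdgas from the minimal model of $G/H$ onto the minimal model of $P(E)$, establishing the desired rational equivalence.

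The principal technical obstacle I anticipate lies in the minimization step of the $G/H$ model: one must carefully justify the inductive elimination of the acyclic pairs $(s_k,\sigma_k)$ within the Sullivan-model formalism, keeping track of how the substitutions for $s_1,\ldots,s_{k-1}$ alter the higher differentials $d\sigma_{k+1},\ldots,d\sigma_{n+1}$, and verify by induction that the residue is precisely $(-1)^k h_k(t_1,t_2)$. The accompanying symmetric-function identity $h_n(y,a)=y\,h_{n-1}(y,a)+a^n$ used to recast $Dw$ into the desired form is straightforward, but must be combined with correct sign conventions for the final assignment to be a genuine cdga isomorphism.
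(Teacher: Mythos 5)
Your proposal does not address the statement at hand. The statement is Theorem~2.1 of the paper: the general fact that for \emph{any} compact connected Lie group $G$ and closed connected subgroup $H$, the cdga $(\Lambda W \otimes \Lambda (sV), d)$ with $dw = 0$ and $d(sv) = H^{*}(Bi)(v)$ is a Sullivan model of the homogeneous space $G/H$. What you have written is instead a sketch of the paper's main result (Theorem~3.2), identifying the rational homotopy type of $P(E)$ with that of $\mathrm{U}(n+1)/\mathrm{U}(1)\times\mathrm{U}(1)\times\mathrm{U}(n-1)$ --- and, crucially, your argument \emph{invokes} the statement you were asked to prove (``For $G/H$ I would apply Theorem 2.1\ldots'') as one of its two main inputs. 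As a proof of Theorem~2.1 this is circular and vacuous: nothing in your text establishes the general claim, which concerns arbitrary pairs $(G,H)$ and makes no reference to projectivized bundles, Chern classes of $\tau$, or the specific unitary groups you work with. (For what it is worth, your sketch of Theorem~3.2 itself is essentially sound and close in spirit to the paper's Section~4, including the elimination of acyclic pairs and the identity $h_k = (t_1+t_2)h_{k-1} - t_1t_2h_{k-2}$; but that is a different theorem. Note also that the paper offers no proof of Theorem~2.1 either --- it quotes it from the cited references.)

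A proof of the actual statement would run along the following lines. The homogeneous space $G/H$ is the homotopy fibre of the classifying map $Bi : BH \to BG$; that is, there is a fibration $G/H \to BH \xrightarrow{Bi} BG$. Since $G$ and $H$ are compact and connected, $H^{*}(BG;\mathbb{Q}) = \Lambda V$ and $H^{*}(BH;\mathbb{Q}) = \Lambda W$ are free graded-commutative algebras on even-degree generators, so $(\Lambda V, 0)$ and $(\Lambda W, 0)$ are Sullivan models of $BG$ and $BH$, and by formality $H^{*}(Bi) : \Lambda V \to \Lambda W$ is a cdga model of $Bi$. One then constructs a relative Sullivan model of this morphism, $(\Lambda V, 0) \to (\Lambda V \otimes \Lambda W \otimes \Lambda (sV), D) \xrightarrow{\simeq} (\Lambda W, 0)$, with $Dw = 0$ and $D(sv) = v - H^{*}(Bi)(v)$; the second arrow (sending $v \mapsto H^{*}(Bi)(v)$, $w \mapsto w$, $sv \mapsto 0$) is a quasi-isomorphism by a standard Koszul-type argument. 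The Sullivan model of the fibre is then obtained by base change $-\otimes_{\Lambda V}\mathbb{Q}$ along the augmentation $\Lambda V \to \mathbb{Q}$, which yields exactly $(\Lambda W \otimes \Lambda (sV), d)$ with $d(sv) = -H^{*}(Bi)(v)$ (the sign is immaterial up to isomorphism $sv \mapsto -sv$), and simple connectivity of $BG$ guarantees that this fibre model computes the homotopy fibre $G/H$. None of these steps --- the identification of $G/H$ as a homotopy fibre, the formality of $BG$ and $BH$, the construction of the relative model, and the base change to the fibre --- appears in your proposal.
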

The resulting Sullivan algebra $(\Lambda W \otimes \Lambda sV , d)$ in \textit{Theorem 2.1.} is not necessarily minimal. 

\section{Projectivization of smooth complex vector bundles} 
In this section, we will provide a Sullivan model of the projectivization of a smooth complex vector bundle. 

\begin{prop}
Let $\xi : \mathbb{C}^n \longrightarrow E \longrightarrow B$ be a smooth complex vector bundle, where $B$ is simply connected and $(A, d)$ be the cdga of $B$. Then a Sullivan model of the total space $P(E)$ of the projectivized bundle $P(\xi) : \mathbb{C}P^{n-1} \longrightarrow P(E) \xlongrightarrow{} B$  is given by $$\psi : (A ,  d) \rightarrow ( A \otimes \Lambda (x_2, x_{2n-1}), 
 D), \; Dx_2 = 0 \; \text{and} \; Dx_{2n-1} = x_{2}^{n}+\sum_{i=1}^{n}c_{i}x_{2}^{n-i} ,$$ where $c_i$ is a cocycle in $A^{2i}$ which represents the $i$-th Chern class of $\xi$. 
\end{prop}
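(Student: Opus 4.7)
The plan is to build the relative Sullivan model of the fibration $P(\xi)$ in three stages. Since $B$ is simply connected, the fibration $P(\xi)\colon \mathbb{C}P^{n-1}\to P(E)\to B$ admits a relative Sullivan model $(A,d)\to (A\otimes\Lambda V,D)\to (\Lambda V,d_V)$ in which $(\Lambda V,d_V)$ is a Sullivan model of the fibre $\mathbb{C}P^{n-1}$. The standard minimal Sullivan model of $\mathbb{C}P^{n-1}$ is $(\Lambda(x_2,x_{2n-1}),d_V)$ with $d_V x_2=0$ and $d_V x_{2n-1}=x_2^n$, so $V$ contains one even generator $x_2$ in degree $2$ and one odd generator $x_{2n-1}$ in degree $2n-1$. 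This fixes the underlying algebra $A\otimes\Lambda(x_2,x_{2n-1})$; only the global differential $D$ remains to be determined.

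To cut down the possibilities for $D$, I would invoke the result of Thomas, cited in the introduction, that the projectivization bundle is a pure fibration. Purity forces $D(V^{\text{even}})=0$ and $D(V^{\text{odd}})\subseteq A\otimes\Lambda V^{\text{even}}$, giving immediately $Dx_2=0$ and $Dx_{2n-1}\in A\otimes\Lambda(x_2)$. Comparing with the fibre differential modulo $A^+$ and counting degrees then forces
\[
Dx_{2n-1}\;=\;x_2^{\,n}+\sum_{i=1}^{n}a_{2i}\,x_2^{\,n-i},
\]
where each $a_{2i}\in A^{2i}$ is a cocycle (a consequence of $D^2=0$ applied to $x_{2n-1}$). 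What remains is to identify the cohomology class $[a_{2i}]\in H^{2i}(B;\mathbb{Q})$ with the rational Chern class $c_i(\xi)$.

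For that identification I would use the Leray-Hirsch theorem already invoked in the introduction, which gives
\[
H^{*}(P(E);\mathbb{Q})\;\cong\;H^{*}(B;\mathbb{Q})[x]\big/\bigl(x^n+c_1(\xi)x^{n-1}+\cdots+c_n(\xi)\bigr),
\]
with $x$ the first Chern class of the dual tautological line bundle on $P(E)$. A direct computation of the cohomology of $(A\otimes\Lambda(x_2,x_{2n-1}),D)$ as a module over $H(A,d)$, using $Dx_2=0$ and $Dx_{2n-1}=x_2^n+\sum a_{2i}x_2^{n-i}$, yields the presentation $H(A)[x_2]/\bigl(x_2^n+\sum[a_{2i}]x_2^{n-i}\bigr)$. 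Comparing the two presentations over $H^{*}(B;\mathbb{Q})$ forces $[a_{2i}]=c_i(\xi)$. One may then replace each $a_{2i}$ by any preferred cocycle representative $c_i$ of the class $c_i(\xi)$: the resulting change of $Dx_{2n-1}$ by a coboundary in $A$ can be absorbed by shifting the generator $x_{2n-1}$ by an element of $A\otimes\Lambda(x_2)$, which is an isomorphism of relative Sullivan algebras.

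The hardest part will be pinning down the identification $[a_{2i}]=c_i(\xi)$ rather than merely establishing that the $a_{2i}$ are cocycles of the correct degree, since one must verify that the generator $x_2$ in the model corresponds, under the Leray-Hirsch quasi-isomorphism, to the first Chern class of the tautological line bundle (rather than to some other natural class of degree $2$). An alternative and possibly cleaner route to this step is to establish the formula first on the universal bundle $\xi_n\colon EU(n)\to BU(n)$, for which $P(EU(n))\simeq B(\mathrm{U}(1)\times \mathrm{U}(n-1))$ and the Sullivan model can be read off from Theorem 2.1, and then deduce the general case by pulling back along a classifying map $B\to BU(n)$ for $\xi$, using the naturality of both the relative Sullivan model construction and the Chern classes.
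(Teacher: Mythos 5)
Your overall architecture (purity to force the shape of $D$, then identification of the coefficients with Chern classes) is sound up to the last step, and your computation that $H(A\otimes\Lambda(x_2,x_{2n-1}),D)\cong H(A)[x_2]/(x_2^n+\sum[a_{2i}]x_2^{n-i})$ is correct, since a monic polynomial is a non-zero-divisor in $H(A)[x_2]$. But the step ``comparing the two presentations over $H^*(B;\mathbb{Q})$ forces $[a_{2i}]=c_i(\xi)$'' genuinely fails as stated, and you are right to be suspicious of it. Two different monic polynomials can present isomorphic $H^*(B)$-algebras: the substitution $x_2\mapsto x_2+b$ with $b\in H^2(B;\mathbb{Q})$ carries $x_2^n+\sum c_i(\xi)x_2^{n-i}$ to $x_2^n+\sum c_i(\xi\otimes L)x_2^{n-i}$ for a line bundle $L$ with $c_1(L)=b$, and indeed $P(\xi)\cong P(\xi\otimes L)$ while the Chern classes differ. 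So an abstract isomorphism of presentations only pins down the $[a_{2i}]$ up to this twisting ambiguity; to get the Chern classes of $\xi$ itself one must know which degree-$2$ class the generator $x_2$ of the model actually represents in $H^2(P(E))$. Purity plus Leray--Hirsch alone cannot see this.

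Your proposed repair --- establish the formula for the universal bundle and pull back along a classifying map --- is the correct fix and is, in substance, what the paper does: its proof produces the relative model by modelling the classifying map of the projectivization (the paper uses $f\colon B\to BP\mathrm{U}(n)$ and sends the polynomial generators of $H^*(BP\mathrm{U}(n);\mathbb{Q})$ to cocycle representatives of the Chern classes, then writes down $Dx_{2n-1}=x_2^n+\sum c_ix_2^{n-i}$ from the model of the universal fibration). Your variant through $B\mathrm{U}(n)$, using $P(E\mathrm{U}(n)\times_{\mathrm{U}(n)}\mathbb{C}^n)\simeq B(\mathrm{U}(1)\times\mathrm{U}(n-1))$ and Theorem 2.1, is if anything cleaner for the Chern-class identification, because $H^*(B\mathrm{U}(n);\mathbb{Q})$ is the free polynomial algebra on the universal Chern classes, whereas $H^*(BP\mathrm{U}(n);\mathbb{Q})$ is only a subalgebra and its generators correspond to polynomials in the $c_i$ rather than to the $c_i$ themselves. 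If you promote that final paragraph from ``alternative'' to the actual argument, and demote the Leray--Hirsch comparison to a consistency check, you have a complete proof.
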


\begin{proof}
Applying \textit{Theorem 2.1.}, we get that $P\mathrm{U}(n) = \mathrm{U}(n)/\mathrm{U}(1)$ has a Sullivan model of the form $(\Lambda( \mathrm{y}_{3}, \cdots , \mathrm{y}_{2n-1}), \;0)$. 
Hence, $P\mathrm{U}(n)$ has the rational homotopy type of 
 $S\mathrm{U}(n)$. The cohomology algebra of $BP\mathrm{U}(n)$ is then \( \Lambda(\mathrm{y}_4, \mathrm{y}_6, \ldots, \mathrm{y}_{2n}) \). 
Additionally, the projectivization fibre bundle \[ P(\xi)  :  \mathbb{C}P^{n-1} \longrightarrow P(E) \longrightarrow B, \]
is classified by a map $f: B \rightarrow BP\mathrm{U}(n).$ 
If $(A, d)$ is a cdga model of $B$, then a cdga model of $f$ is  $$\phi : (\Lambda(\mathrm{y}_{4}, \cdots , \mathrm{y}_{2n}), 0) \longrightarrow (A , d),\;\; \phi(\mathrm{y}_{2i})=c_i,$$ where $[c_{i}] \in H^{2i}(A , d)$ are the Chern classes of $\xi$, for $i=1,\cdots , n $. As $(\Lambda(x_{2}, x_{2n-1}),d)$ with $dx_{2}=0, \; dx_{2n-1}=x_{2}^{n}$ is a Sullivan model of $\mathbb{C}P^{n-1}$, then a relative Sullivan model of the projectivization bundle is given by 
 \[ (A, d_A) \xlongrightarrow{\psi} (A \otimes \Lambda(x_{2}, x_{2n-1}), D) \longrightarrow (\Lambda(x_{2}, x_{2n-1}), d), \] with $\displaystyle Dx_{2} = 0, \;\;  Dx_{2n-1} = x_{2}^{n}+\sum_{i=1}^{n}c_{i}x_{2}^{n-i}\;\; \text{and} \;\; D_{|A}=d_A.$ 
 \end{proof}
 
 \begin{cor}
     Let $\tau : \mathbb{C}^n \longrightarrow E \longrightarrow \mathbb{C}P^n$ be the tangent bundle over $\mathbb{C}P^n$ and $P(\tau) : \mathbb{C}P^{n-1} \longrightarrow P(E) \longrightarrow \mathbb{C}P^n$ its projectivization bundle. Then the cohomology algebra of the total space $P(E)$ is given by $H^*(P(E), \mathbb{Q}) = \Lambda (x_2, y_2)/ I$, where $I$ is the ideal generated by \[\Bigg\{ x_{2}^{n}+\sum_{i=1}^{n}y_2^{i}x_{2}^{n-i} \; , \; y_2^{n+1} \Bigg\}.\] 
 \end{cor}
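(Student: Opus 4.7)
My plan is to combine Proposition~3.1 with the standard minimal Sullivan model of $\mathbb{C}P^n$, then read off the cohomology of the resulting pure Sullivan algebra via a regular-sequence argument.

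First I take as model of $\mathbb{C}P^n$ the cdga $(A,d) = (\Lambda(y_2, y_{2n+1}), d)$ with $dy_2 = 0$ and $dy_{2n+1} = y_2^{n+1}$. Plugging this into Proposition~3.1 produces a Sullivan model of $P(E)$ of the form
\[
(\Lambda(y_2, y_{2n+1}, x_2, x_{2n-1}), D),
\]
with $Dy_2 = Dx_2 = 0$, $Dy_{2n+1} = y_2^{n+1}$, and $Dx_{2n-1} = x_2^n + \sum_{i=1}^n c_i x_2^{n-i}$. Since $c(\tau) = (1+a)^{n+1}$, each cocycle $c_i$ representing the $i$-th Chern class is a rational multiple of $y_2^i$, so the relation takes the form $Dx_{2n-1} = x_2^n + \sum_{i=1}^n \alpha_i y_2^i x_2^{n-i}$ for explicit nonzero rationals $\alpha_i$ that may be absorbed into the choice of generator to give the clean presentation appearing in the corollary.

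The resulting cdga is manifestly pure in the sense of Section~2: $y_2, x_2$ are closed even generators and the odd generators $y_{2n+1}, x_{2n-1}$ differentiate into the polynomial subalgebra $\Lambda(y_2, x_2) \cong \mathbb{Q}[y_2, x_2]$. The central step is to verify that
\[
\{\, y_2^{n+1}, \; x_2^n + \sum_{i=1}^n \alpha_i y_2^i x_2^{n-i} \,\}
\]
is a regular sequence in $\mathbb{Q}[y_2, x_2]$. The first entry is a pure power of $y_2$ and is plainly a non-zero-divisor; modulo $y_2^{n+1}$ the second is monic of positive degree in $x_2$, hence is again a non-zero-divisor in $\mathbb{Q}[y_2, x_2]/(y_2^{n+1})$. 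I expect this regularity check to be the only delicate point, but given the explicit leading-term structure it is immediate.

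With regularity established, the standard Koszul-resolution argument for pure Sullivan algebras collapses the cohomology onto the quotient ring:
\[
H^*(\Lambda V, D) \;\cong\; \mathbb{Q}[y_2, x_2] \,/\, \bigl( y_2^{n+1}, \; x_2^n + \sum_{i=1}^n \alpha_i y_2^i x_2^{n-i} \bigr),
\]
which, after identifying $\Lambda(x_2, y_2) \cong \mathbb{Q}[x_2, y_2]$, recovers the presentation claimed in the corollary. Everything outside the regular-sequence verification is a direct invocation of Proposition~3.1 and the standard cohomology formula for pure models, so the plan reduces to assembling these two ingredients correctly.
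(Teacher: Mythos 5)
Your proposal is correct, and the first half (feeding the minimal model of $\mathbb{C}P^n$ into Proposition~3.1, identifying the $c_i$ as multiples of $y_2^i$ from $c(\tau)=(1+y_2)^{n+1}$, and normalizing by a change of generators) coincides with the paper's proof, which performs the same normalization ``after a suitable change of variables.'' Where you genuinely diverge is the final step. The paper does not compute the cohomology of the pure model directly: it observes that $P(E)$ carries a symplectic structure, invokes Lupton--Oprea to conclude that $P(E)$ is formal, and then reads the ring off from the Leray--Hirsch presentation quoted in the introduction. You instead stay entirely inside the algebra: you note that the two differentials $y_2^{n+1}$ and $x_2^n+\sum_i \alpha_i y_2^i x_2^{n-i}$ form a regular sequence in $\mathbb{Q}[x_2,y_2]$ (your check is sound --- the second element is monic in $x_2$, hence a non-zero-divisor over $\mathbb{Q}[y_2]/(y_2^{n+1})$, and regularity of a homogeneous sequence in a polynomial ring is order-independent), so the pure model is the Koszul complex of a regular sequence and its cohomology collapses onto the quotient ring. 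This buys you a self-contained, purely algebraic computation that needs neither the symplectic geometry of $P(E)$ nor an external formality theorem, and it simultaneously establishes that the cohomology is concentrated in even degrees. One small point to tighten: the normalization of the coefficients $\alpha_i=\binom{n+1}{i}$ is not a rescaling of generators but the affine substitution $x_2\mapsto x_2+y_2$, which carries $\sum_i\binom{n+1}{i}y_2^i x_2^{n-i}=\bigl((x_2+y_2)^{n+1}-y_2^{n+1}\bigr)/x_2$ to $\sum_{i=0}^{n}x_2^{n-i}y_2^i=(x_2^{n+1}-y_2^{n+1})/(x_2-y_2)$; the paper glosses over this in exactly the same way, so it is not a gap relative to the paper, but it is worth making explicit.
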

 
\begin{proof}
 The Sullivan minimal model of $\mathbb{C}P^n$ is given by $(\Lambda(y_2, y_{2n+1}), \;d)$, where $dy_2 = 0$ and $dy_{2n+1} = y_2^{n+1}.$ As the total Chern class of the tangent bundle is $c(\tau) = (1+y_2)^{n+1}$, where $y_2$ is a generator of $ H^{2}(\mathbb{C}P^n , \mathbb{Z})$, then $c_i(\tau) = \binom{n+1}{i}y_2^i$ for $i=1,\cdots,n.$ Since $\binom{n+1}{i}y_2^i\neq 0,$ the associated principle $\mathrm{U}(n)$-bundle $\mathrm{U}(n) \longrightarrow P \longrightarrow \mathbb{C}P^n$ is classified by \[\phi : \Lambda(z_2, z_4, \cdots, z_{2n}) \longrightarrow \Lambda(y_2, y_{2n+1}),\] where $\phi(z_{2i})=y_2^i$, after a suitable change of variables. Therefore, a Sullivan model of the projectivized bundle total space $P(E)$ is 
\[( \Lambda(y_{2}, y_{2n+1}) \otimes \Lambda( x_{2},  x_{2n-1}) , \;D),\]
where \[Dx_{2} \;=\; Dy_{2}\;=\;0 ,\;\;
Dy_{2n+1}\;=\; y_{2}^{n+1} \;\;\text{and}\;\; Dx_{2n-1}\;=\;  \sum_{i=0}^{n}x_{2}^{n-i}y_{2}^{i}. \]

As the Sullivan model \((\Lambda(x_2, y_2, x_{2n-1}, y_{2n+1}), D)\) of \(P(E)\) is both pure and symplectic, it implies that \(P(E)\) inherits a symplectic structure from both the fibre \(\mathbb{C}P^{n-1}\) and the base \(\mathbb{C}P^n\) \cite{mcduff2017introduction}. Consequently, \(P(E)\) is formal, as stated in Corollary 2.3 of \cite{lupton1994symplectic}. Hence, the cohomology algebra of $P(E)$ is $H^*(P(E), \mathbb{Q}) \cong \Lambda(x_2,\;y_2)/I$,  where the ideal $I$ is generated by \[ \Bigg\{ x_{2}^{n}+\sum_{i=1}^{n}y_2^{i}x_{2}^{n-i} \; , \; y_2^{n+1} \Bigg\}. \]
\end{proof}

\section{Main results}
In this section we present our main result. First we compute the minimal Sullivan model of the homogeneous space $ \mathrm{U}(n+1)/\mathrm{U}(1) \times \mathrm{U}(1) \times \mathrm{U}(n-1)$ and then determine the rational homotopy type of the total space $P(E)$ of the projectivization bundle $P(\tau)$ of the tangent bundle $\tau$ over $\mathbb{C}P^n$. 

\subsection{Sullivan minimal model of $ \mathrm{U}(n+1)/\mathrm{U}(1) \times \mathrm{U}(1) \times \mathrm{U}(n-1)$}
Here we state and prove the theorem of our first main result.
\begin{thm}
   For $n \geq 2,$ the homogeneous space $ \mathrm{U}(n+1)/\mathrm{U}(1) \times \mathrm{U}(1) \times \mathrm{U}(n-1)$ has a Sullivan model of the form $(\Lambda (a_2, b_2, v_{2n-1}, v_{2n+1}),\; d)$,  where $da_2 \;=\;  db_2 \;=\; 0$,  $\displaystyle dv_{2n-1}= (-1)^{n+1} \sum_{i=0}^{n}a_{2}^{n-i}b_{2}^{i}$ and $\displaystyle dv_{2n+1}=(-1)^{n+1} \sum_{i=1}^{n}a_{2}^{n-i+1}b_{2}^{i}$. In particular, its cohomology algebra is given by 
 $\Lambda(a_{2}, b_{2})/I,$
where $I$ is the ideal generated by  \[  \left\{ (-1)^{n+1} \sum_{i=0}^{n}a_{2}^{n-i}b_{2}^{i} \; , \; (-1)^{n+1} \sum_{i=1}^{n}a_{2}^{n-i+1}b_{2}^{i} \right\}. \]
\end{thm}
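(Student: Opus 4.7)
The plan is to apply Theorem 2.1 with $G=\mathrm{U}(n+1)$ and $H=\mathrm{U}(1)\times\mathrm{U}(1)\times\mathrm{U}(n-1)$, and then reduce the resulting non-minimal Sullivan model to its minimal form by stripping off acyclic pairs. Recall that $H^{*}(B\mathrm{U}(n+1);\mathbb{Q})=\Lambda(c_{1},\ldots,c_{n+1})$ with $|c_{k}|=2k$, while $H^{*}(BH;\mathbb{Q})=\Lambda(a_{2},b_{2},d_{2},d_{4},\ldots,d_{2(n-1)})$, where $a_{2},b_{2}$ are the generators of the two $\mathrm{U}(1)$ factors and the $d_{2i}$ are the Chern classes of $\mathrm{U}(n-1)$. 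The block-diagonal inclusion $H\hookrightarrow G$ gives the Whitney-sum formula
\[
H^{*}(Bi)(c_{k})=d_{2k}+(a_{2}+b_{2})\,d_{2(k-1)}+a_{2}b_{2}\,d_{2(k-2)},
\]
with the conventions $d_{0}=1$ and $d_{2i}=0$ for $i<0$ or $i\geq n$. Theorem 2.1 then produces a (non-minimal) Sullivan model on the generators $a_{2},b_{2},d_{2},\ldots,d_{2(n-1)},sc_{1},\ldots,sc_{n+1}$ with $|sc_{k}|=2k-1$ and $d(sc_{k})$ given by the displayed formula.

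The second step is to minimize this model. For $k=1,\ldots,n-1$ the leading term of $d(sc_{k})$ is $d_{2k}$, so after a triangular change of variables $d_{2k}\mapsto d_{2k}-p_{k}$ the pair $(d_{2k},sc_{k})$ splits off as an acyclic tensor factor. The polynomials $p_{k}\in\Lambda(a_{2},b_{2})$ required for this are forced to satisfy the recurrence $p_{k}=-(a_{2}+b_{2})\,p_{k-1}-a_{2}b_{2}\,p_{k-2}$ with $p_{0}=1$ and $p_{-1}=0$. This is a linear recurrence with characteristic roots $-a_{2},-b_{2}$, and a short induction yields the closed form
\[
p_{k}=(-1)^{k}\sum_{i=0}^{k}a_{2}^{k-i}b_{2}^{i}.
\]

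After these eliminations, only $a_{2},b_{2}$ and the two survivors $v_{2n-1}:=sc_{n}$ and $v_{2n+1}:=sc_{n+1}$ remain. Substituting $d_{2(n-1)}\mapsto p_{n-1}$ and $d_{2(n-2)}\mapsto p_{n-2}$ into $d(sc_{n})=(a_{2}+b_{2})\,d_{2(n-1)}+a_{2}b_{2}\,d_{2(n-2)}$ and $d(sc_{n+1})=a_{2}b_{2}\,d_{2(n-1)}$, and then invoking the same recurrence one more time (so that $(a_{2}+b_{2})\,p_{n-1}+a_{2}b_{2}\,p_{n-2}=-p_{n}$), produces exactly the two formulas in the statement.

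For the cohomology claim, the polynomial part of the image of $d$ is precisely the ideal $I$ generated by $dv_{2n-1}$ and $dv_{2n+1}$, so the polynomial contribution to $H^{*}$ is $\Lambda(a_{2},b_{2})/I$. Because the homogeneous space is compact and satisfies $\mathrm{rk}\,G=\mathrm{rk}\,H=n+1$, its rational cohomology is finite-dimensional, forcing $\{dv_{2n-1},dv_{2n+1}\}$ to be a regular sequence in $\mathbb{Q}[a_{2},b_{2}]$. Regularity leaves only the Koszul syzygy, which makes every closed element of the form $\alpha v_{2n-1}+\beta v_{2n+1}$ exact and shows that no nonzero multiple of $v_{2n-1}v_{2n+1}$ is closed. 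The main delicate point of the proof is the sign and index bookkeeping during the iterated elimination; the closed form for $p_{k}$ is the identity that makes the whole calculation collapse into the stated formulas.
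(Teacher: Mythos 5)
Your proposal is correct and follows essentially the same route as the paper: apply Theorem~2.1 to the block-diagonal inclusion $\mathrm{U}(1)\times\mathrm{U}(1)\times\mathrm{U}(n-1)\hookrightarrow\mathrm{U}(n+1)$ via the Whitney-sum formula, then successively strip off the acyclic pairs coming from the relations in degrees $2,4,\ldots,2n-2$. Your closed form $p_{k}=(-1)^{k}\sum_{i=0}^{k}a_{2}^{k-i}b_{2}^{i}$ merely packages the paper's step-by-step variable changes into a single recurrence identity, and your regular-sequence argument for the cohomology claim supplies a justification the paper leaves implicit.
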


\begin{proof}
We apply \textit{Theorem 2.1.} to compute a Sullivan model of  $\mathrm{U}(n+1)/\mathrm{U}(1) \times \mathrm{U}(1) \times \mathrm{U}(n-1)$, for $n \geq 2$. Let $i : H = \mathrm{U}(1) \times \mathrm{U}(1) \times \mathrm{U}(n-1) \hookrightarrow G = \mathrm{U}(n+1)$ be the inclusion. Then applying results in  \cite{Greub-Halp-V3-1976}, the classifying map $Bi : BH \rightarrow BG$ has a Sullivan model $$ \varphi : (\Lambda(v_{2}, v_{4}, v_{6}, \cdots , v_{2n+2}), 0) \rightarrow (\Lambda(a_{2}, b_{2}, z_{2}, z_{4}, z_{6}, \cdots, z_{2n-2}), 0)$$ with 
\begin{eqnarray*}
\varphi(v_{2})&=&a_{2}+b_{2}+z_{2}, \\
\varphi(v_{4}) &=& a_{2}b_{2}+b_{2}z_{2}+a_{2}z_{2}+z_{4}, \\
\varphi(v_{6})&=&a_{2}b_{2}z_{2}+ (a_{2}+b_{2})z_{4} + z_{6}, \\ 
\varphi(v_{8}) &=& a_{2}b_{2}z_{4}+ (a_{2}+b_{2})z_{6} + z_{8}, \\
\vdots &=& \quad \vdots \\
\varphi(v_{2n-2}) \; &=& \; a_{2}b_{2}z_{2n-6} + (a_{2}+b_{2})z_{2n-4} + z_{2n-2},   \\
\varphi(v_{2n})&=&a_{2}b_{2}z_{2n-4} + (a_{2}+b_{2})z_{2n-2},\\
\varphi(v_{2n+2})&=&a_{2}b_{2}z_{2n-2}. 
\end{eqnarray*}

Therefore, a Sullivan model of $\mathrm{U}(n+1) / \mathrm{U}(1) \times \mathrm{U}(1) \times \mathrm{U}(n-1) $ is \[(\Lambda( a_{2}, b_{2}, z_{2},z_{4},\cdots , z_{2n-2}, v_{1}, v_{3}, v_{5},\cdots , v_{2n+1}), d), \;\;\text{where} \] 
\begin{eqnarray*}
da_{2}&=&db_{2}=dz_{2}=0, \\
dv_{1}&=& a_{2}+b_{2}+z_{2}, \\
dv_{3}&=& a_{2}b_{2}+b_{2}z_{2}+a_{2}z_{2}+z_{4}, \\
dv_{5}&=&a_{2}b_{2}z_{2}+ (a_{2}+b_{2})z_{4} + z_{6}, \\
dv_{7}&=&a_{2}b_{2}z_{4}+ (a_{2}+b_{2})z_{6} + z_{8}, \\
\vdots~~~ &=& \quad \vdots  \\
dv_{2n-3}&=&a_{2}b_{2}z_{2n-6} + (a_{2}+b_{2})z_{2n-4} + z_{2n-2},  \\ 
dv_{2n-1}&=&a_{2}b_{2}z_{2n-4} + (a_{2}+b_{2})z_{2n-2}, \\
dv_{2n+1}&=&a_{2}b_{2}z_{2n-2}. 
\end{eqnarray*}

Since $dv_{1}$ is linear, we let $t_{2} = a_{2}+b_{2}+z_{2}$ and by the substitution of $t_{2}$ we obtain the following isomorphic Sullivan algebra \[(\Lambda( a_{2}, b_{2}, t_{2},z_{4},\cdots , z_{2n-2}, v_{1}, v_{3}, v_{5},\cdots , v_{2n+1}), d), \;\;\text{where}\]
\begin{eqnarray*}
da_{2}&=&db_{2}=dz_{2}=0, \\
dv_{1}&=&t_{2}, \\
dv_{3}& =& a_{2}b_{2}+(a_{2}+b_{2})[t_{2}-(a_{2}+b_{2})]+z_{4}, \\
dv_{5}&=&a_{2}b_{2}[t_{2}-(a_{2}+b_{2})]+ (a_{2}+b_{2})z_{4} + z_{6}, \\
dv_{7}& =& a_{2}b_{2}z_{4}+ (a_{2}+b_{2})z_{6} + z_{8} \; , \\
\vdots~~~ &=& \quad \vdots  \\
dv_{2n-3} &=& a_{2}b_{2}z_{2n-6} + (a_{2}+b_{2})z_{2n-4} + z_{2n-2},  \\
dv_{2n-1}&=&a_{2}b_{2}z_{2n-4} + (a_{2}+b_{2})z_{2n-2}, \\
dv_{2n+1} &=& a_{2}b_{2}z_{2n-2}.
\end{eqnarray*}
Therefore, the cancellation by the acyclic ideal generated by $v_{1}$ and $t_{2}$ gives a Sullivan algebra
 \[(\Lambda( a_{2}, b_{2},z_{4},z_{6},\cdots , z_{2n-2}, v_{3}, v_{5},\cdots , v_{2n+1}), d), \;\;\text{where}\]
\begin{eqnarray*}
da_{2}&=&db_{2}\;=\;0, \\
dv_{3}&=&a_{2}b_{2}-(a_{2}+b_{2})^2 +z_{4}, \\
dv_{5}&=&\left(-a_{2}b_{2}+z_4\right)(a_{2}+b_{2}) + z_{6}, \\
dv_{7}&=&a_{2}b_{2}z_{4}+ (a_{2}+b_{2})z_{6} + z_{8}, \\
\vdots~~~ &=& \quad \vdots  \\
dv_{2n-3}&=&a_{2}b_{2}z_{2n-6} + (a_{2}+b_{2})z_{2n-4} + z_{2n-2},  \\
dv_{2n-1}&=&a_{2}b_{2}z_{2n-4} + (a_{2}+b_{2})z_{2n-2}, \\
dv_{2n+1} &=& a_{2}b_{2}z_{2n-2}.
\end{eqnarray*}
 
Letting  $t_{4} = a_{2}b_{2}-(a_{2}+b_{2})^{2}+z_{4}$ yields the following  Sullivan algebra  \[(\Lambda( a_{2}, b_{2},t_{4},z_{6},\cdots , z_{2n-2}, v_{1}, v_{3}, v_{5},\cdots , v_{2n+1}), d), \;\;\text{where}\]
\begin{eqnarray*}
da_{2}&=&db_{2}\;=\;0, \\
dv_{3}& = &t_{4}, \\
dv_{5}&=&  (a_{2}+b_{2})  \Bigr[-a_{2}b_{2} + t_4 -a_{2}b_{2} + (a_{2}+b_{2})^2 \Bigr] + z_{6}, \\
dv_{7}&=&a_{2}b_{2}\Bigr[ t_4 -a_{2}b_{2} + (a_{2}+b_{2})^2 \Bigr]+ (a_{2}+b_{2})z_{6} + z_{8}, \\
\vdots~~~ &=& \quad \vdots  \\
dv_{2n-3}&=&a_{2}b_{2}z_{2n-6} + (a_{2}+b_{2})z_{2n-4} + z_{2n-2},   \\
dv_{2n-1} &=& a_{2}b_{2}z_{2n-4} + (a_{2}+b_{2})z_{2n-2}, \\
dv_{2n+1}&=&a_{2}b_{2}z_{2n-2}.
\end{eqnarray*}

Canceling by the acyclic ideal generated by $v_{3}$ and $t_{4}$, we obtain a Sullivan algebra
 \[(\Lambda( a_{2}, b_{2},z_{6},\cdots , z_{2n-2}, v_{5},\cdots , v_{2n+1}), d),\] where
\begin{eqnarray*}
da_{2}&=&db_{2}\;=\;0, \\
dv_{5}& = &-2a_{2}b_{2}(a_{2}+b_{2})+ (a_{2}+b_{2})^{3} + z_{6}, \\
dv_{7}&=&-(a_{2}b_{2})^2 + a_{2}b_{2}(a_{2}+b_{2})^2 + (a_{2}+b_{2})z_{6} + z_{8}, \\
\vdots~~~ &=& \quad \vdots  \\
dv_{2n-3}&=&a_{2}b_{2}z_{2n-6} + (a_{2}+b_{2})z_{2n-4} + z_{2n-2},   \\
dv_{2n-1}&=& a_{2}b_{2}z_{2n-4} + (a_{2}+b_{2})z_{2n-2}, \\
dv_{2n+1}&=&a_{2}b_{2}z_{2n-2}.
\end{eqnarray*}

The process of changing variables continues up to $t_{2n-2} = a_{2}b_{2}z_{2n-6} + (a_{2}+b_{2})z_{2n-4} + z_{2n-2}$, yielding the Sullivan algebra  $$(\Lambda( a_{2}, b_{2}, t_{2n-2}, v_{2n-3}, v_{2n-1}, v_{2n+1}), \;d),$$ where
\begin{eqnarray*}
da_{2}&=&db_{2}\;=\;0, \\
dv_{2n-3}&=& t_{2n-2},   \\
dv_{2n-1}&=&a_{2}b_{2}z_{2n-4} + (a_{2}+b_{2})\Bigr[ t_{2n-2} - a_{2}b_{2}z_{2n-6} -  (a_{2}+b_{2})z_{2n-4} \Bigr],\\
dv_{2n+1}&=&a_{2}b_{2}\Bigr[ t_{2n-2} - a_{2}b_{2}z_{2n-6} -  (a_{2}+b_{2})z_{2n-4} \Bigr].
\end{eqnarray*}
By cancelling the acyclic ideal generated by $\{t_{2n-2} , v_{2n-3}\}$, we get a Sullivan minimal model 
$$(\Lambda( a_{2}, b_{2},v_{2n-1}, v_{2n+1}), d),$$ where
\[da_{2} \;=\; db_{2}\;=\;0 ,\;\;
dv_{2n-1}\;=\; (-1)^{n+1} \sum_{i=0}^{n}a_{2}^{n-i}b_{2}^{i} \;\;\text{and}\;\;
dv_{2n+1}\;=\; (-1)^{n+1} \sum_{i=1}^{n}a_{2}^{n-i+1}b_{2}^{i}. \]

 Hence, the cohomology algebra of $\mathrm{U}(n+1)/\mathrm{U}(1) \times \mathrm{U}(1) \times \mathrm{U}(n-1)$ is
\[H^{*}(\mathrm{U}(n+1)/\mathrm{U}(1) \times \mathrm{U}(1) \times \mathrm{U}(n-1), \mathbb{Q}) = \Lambda(a_{2}, b_{2})/ I, \] where $I$ is generated by \[ \left\{ (-1)^{n+1} \sum_{i=0}^{n}a_{2}^{n-i}b_{2}^{i}, \;\;
 (-1)^{n+1} \sum_{i=1}^{n}a_{2}^{n-i+1}b_{2}^{i}\right\}. \]
\end{proof}

\subsection{Rational homotopy type of the projectivization of the tangent bundle over $\mathbb{C}P^{n}$}
In this section, we determine the rational homotopy type of the total space $P(E)$ of the projectivization of the tangent bundle $\tau : \mathbb{C}^n \longrightarrow E \longrightarrow \mathbb{C}P^{n}$.

\begin{thm}
 Given the tangent bundle $\tau : \mathbb{C}^n \longrightarrow E \longrightarrow \mathbb{C}P^n$ over $\mathbb{C}P^n$ where $ n\geq 2$, then the total space $P(E)$ of the projectivization bundle $P(\tau) : \mathbb{C}P^{n-1} \rightarrow P(E) \rightarrow \mathbb{C}P^{n}$ has the rational homotopy type of $\mathrm{U}(n+1) / \mathrm{U}(1) \times \mathrm{U}(1) \times \mathrm{U}(n-1).$
\end{thm}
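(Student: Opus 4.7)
The plan is to prove the statement by constructing an explicit isomorphism of Sullivan minimal models, since two simply connected spaces of finite rational type share a rational homotopy type exactly when their minimal Sullivan models are isomorphic as commutative differential graded algebras. Both spaces involved here are simply connected (the homogeneous space because $\mathrm{U}(n+1)$ is connected and $\mathrm{U}(1)\times\mathrm{U}(1)\times\mathrm{U}(n-1)$ is closed and connected; the total space $P(E)$ as the total space of a fibration with simply connected base and fiber), and minimal models for both have already been produced earlier in the paper.

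From the proof of the corollary in Section~3, $P(E)$ admits the minimal Sullivan model $(\Lambda(x_2, y_2, x_{2n-1}, y_{2n+1}), D)$ with $Dx_2 = Dy_2 = 0$, $Dx_{2n-1} = \sum_{i=0}^n x_2^{n-i}y_2^i$ and $Dy_{2n+1} = y_2^{n+1}$. From the theorem immediately preceding this subsection, the homogeneous space $\mathrm{U}(n+1)/\mathrm{U}(1)\times\mathrm{U}(1)\times\mathrm{U}(n-1)$ admits the minimal model $(\Lambda(a_2, b_2, v_{2n-1}, v_{2n+1}), d)$ with $da_2 = db_2 = 0$, $dv_{2n-1} = (-1)^{n+1}\sum_{i=0}^n a_2^{n-i}b_2^i$ and $dv_{2n+1} = (-1)^{n+1}\sum_{i=1}^n a_2^{n-i+1}b_2^i$. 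The two models have isomorphic graded vector spaces of generators (two generators in degree $2$, one in degree $2n-1$, and one in degree $2n+1$), so all that is needed is to match the differentials via a suitable change of variables.

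The key computational observation I would exploit is the telescoping identity
\[
b_2\cdot\sum_{i=0}^n a_2^{n-i}b_2^i \;-\; \sum_{i=1}^n a_2^{n-i+1}b_2^i \;=\; b_2^{n+1},
\]
which suggests defining a cdga map $\phi$ from the $P(E)$ model into the homogeneous space model by
\[
\phi(x_2)=a_2,\quad \phi(y_2)=b_2,\quad \phi(x_{2n-1})=(-1)^{n+1}v_{2n-1},\quad \phi(y_{2n+1})=(-1)^{n+1}\bigl(b_2 v_{2n-1}-v_{2n+1}\bigr).
\]
Commutation with the differential on $x_2,y_2,x_{2n-1}$ is immediate, while on $y_{2n+1}$ it reduces exactly to the telescoping identity above. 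Since $\phi$ is unitriangular on generators (the matrix sending $x_2,y_2,x_{2n-1},y_{2n+1}$ to their images is lower triangular with units on the diagonal in an obvious ordering of generators), it is an isomorphism of cdgas, and therefore of Sullivan minimal models.

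I expect the only genuinely delicate point to be guessing the combination $b_2 v_{2n-1} - v_{2n+1}$ that cancels all the mixed monomials and leaves behind precisely $b_2^{n+1}$; after that, the verification is a routine manipulation of indexed sums and keeping track of the sign $(-1)^{n+1}$. No further formality, rank, or spectral sequence argument is required beyond the standard fact, recalled in Section~2, that the minimal Sullivan model is a complete rational homotopy invariant.
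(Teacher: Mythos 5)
Your proposal is correct and follows essentially the same route as the paper: both prove the result by exhibiting an explicit isomorphism between the two minimal Sullivan models already computed in Theorem~3.1 and Corollary~3.2, with the key step being the same telescoping combination (your $b_2v_{2n-1}-v_{2n+1}$ is the mirror of the paper's $y_2x_{2n-1}-y_{2n+1}$), and concluding via the standard fact that a morphism of minimal models inducing an isomorphism on generators' linear parts is an isomorphism. Your map is simply the inverse of the paper's, and your sign bookkeeping verifies cleanly.
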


\begin{proof}
By the previous theorem the Sullivan minimal model of $\mathrm{U}(n+1) / \mathrm{U}(1) \times \mathrm{U}(1) \times \mathrm{U}(n-1)$ is given  by 
\[ (\Lambda( a_{2}, b_{2},v_{2n-1}, v_{2n+1}), d),\] where
\[da_{2} \;=\; db_{2}\;=\;0 ,\;\;
dv_{2n-1}\;=\; (-1)^{n+1} \sum_{i=0}^{n}a_{2}^{n-i}b_{2}^{i} \;\;\text{and}\;\;
dv_{2n+1}\;=\; (-1)^{n+1} \sum_{i=1}^{n}a_{2}^{n-i+1}b_{2}^{i}. \]

Moreover, the Sullivan model of the projectivized bundle $P(E)$ is  given in \textit{Corollary 3.2.} as 
\[( \Lambda(y_{2}, y_{2n+1}) \otimes \Lambda( x_{2},  x_{2n-1}) , d),\]
where \[dx_{2} \;=\; dy_{2}\;=\;0 ,\;\;
dx_{2n-1}\;=\;  \sum_{i=0}^{n}x_{2}^{n-i}y_{2}^{i} \;\;\text{and}\;\;
dy_{2n+1}\;=\; y_{2}^{n+1}. \]

We define a morphism $$f : (\Lambda( a_{2}, b_{2},v_{2n-1}, v_{2n+1}), d) \rightarrow (\Lambda( x_{2}, y_{2}, x_{2n-1}, y_{2n+1}), d),$$
 by $f({a_{2}})=(-1)^{n+1}x_{2}$, $f({b_{2}})=(-1)^{n+1}y_{2}$, $f({v_{2n-1}})=(-1)^{n+1}x_{2n-1}$ and $f({v_{2n+1}})=(-1)^{n+1}(y_{2}x_{2n-1}-y_{2n+1})$. 
It is easily seen that $f$ commutes with differentials. A filtration by world length gives rise to an isomorphism on the associated bigraded modules. A standard spectral sequence argument yields that $f$ is an isomorphism (see \cite[Prop.~18.2]{Felix2001}). 
Hence,  $P(E)$ has the rational homotopy type of 
 $\mathrm{U}(n+1) / \mathrm{U}(1) \times \mathrm{U}(1) \times \mathrm{U}(n-1).$  
 \end{proof}

\textbf{Acknowledgement}

This study is based on a portion of Ndlovu's Ph.D. thesis, which was conducted under Gatsinzi's supervision.

\end{document}